\DeclareMathOperator{\rank}{rank}
\def \a{\alpha}
\def \b{\beta}
\def \g{\gamma}
\def \phi{\varphi}
\def \Phi{\varPhi}
\def \p{\pi}
\def \C{\mathbb{C}\,}
\def\widecheckg{g^{\hspace*{-2.5pt}\vbox to 5pt{\hbox to
0pt{\LARGE$\check{}$}}}\hspace*{2pt}}
\def\widecheckl{\lambda^{\hspace*{-3.5pt}\vbox to 8pt{\hbox to
0pt{\LARGE$\check{}$}}}\hspace*{2pt}}
\begin{document}

\title{Almost $G_2$-manifolds with\\ 
almost twistorial structures}
\author{Radu Pantilie}  
%\thanks{} 
\address{R.~Pantilie, Institutul de Matematic\u a ``Simion~Stoilow'' al Academiei Rom\^ane,
C.P. 1-764, 014700, Bucure\c sti, Rom\^ania} 
\email{\href{mailto:Radu.Pantilie@imar.ro}{Radu.Pantilie@imar.ro}} 
\subjclass[2020]{53C28} 
\keywords{almost $G_2$-manifolds, almost twistorial structures}

\newtheorem{thm}{Theorem}[section]
\newtheorem{lem}[thm]{Lemma}
\newtheorem{cor}[thm]{Corollary}
\newtheorem{prop}[thm]{Proposition}

\theoremstyle{definition}

\newtheorem{defn}[thm]{Definition}
\newtheorem{rem}[thm]{Remark}
\newtheorem{exm}[thm]{Example}

\numberwithin{equation}{section}

\thispagestyle{empty}

\begin{abstract}
We give the necessary and sufficient conditions for the Penrose-Ward transformation to work on almost $G_2$-manifolds, 
endowed with natural almost twistorial structures.
\end{abstract} 

\maketitle 

\section*{Introduction}  
  
\indent 
This paper grew out, in part, of the fact that the method we used to characterise, in \cite{Pan-eholon}\,, the integrability of almost twistorial structures  
is flawed (see Remark \ref{rem:corrigenda}\,, below). On the other hand, we continue (see, also, \cite{Pan-dim6}\,) our study of the differential geometry related 
to $G_2$\,, the simply-connected simple complex Lie group of dimension $14$ (and to its compact real form).\\ 
\indent 
The main source of Euclidean twistorial structures \cite{DesLouPan} is provided by the closed orbits, on the Grassmannians of (co)isotropic spaces, 
of the complexification of a compact Lie group, endowed with a faithfull orthogonal representation. For $G_2$\,, there are two `fundamental' such 
(generalized) Grassmannians: the hyperquadric $Q$ in the projectivisation of the space $U$ of complex imaginary octonions, and the space $Y$ of anti-self-dual spaces in $U$.  
See Section \ref{section:iso_octo} from which it, also, follows that $G_2$ has two orbits on each of ${\rm Gr}_k^0(U)$\,, $k=2,3$\,, with the closed orbits 
given by $Y$ and $Q$\,, respectively, the latter, thus, leading to the canonical Euclidean twistorial structures on $U$  (cf.~\cite{Pan-eholon}\,). 
As the dual of this is not maximal (in the sense of \cite{DesLouPan}\,) we are led to, also, consider an Euclidean twistorial structure on $\C\!\times U$. 
See Section \ref{section:aG2_atwist}\,, where the integrability of the obtained canonical almost twistorial structures is studied, by using \cite{Bott-BBW}\,.

\section{Isotropic spaces closed under the octonionic cross product} \label{section:iso_octo} 

\indent 
We work in the complex analytic category. Let $\mathfrak{g}_2$ be the (complex) simple Lie algebra of dimension $14$\,, and let $G_2$ 
be the simply-connected simple Lie group whose Lie algebra is $\mathfrak{g}_2$\,.\\  
\indent 
Let $U$ be the space of imaginary (complex) octonions. Let $Q\subseteq PU$ be the quadric of isotropic directions,  
and let $Q'$ be the space of projective planes contained by $Q$. Recall (see \cite{Pan-eholon}\,) that $Q$ can be embedded into $Q'$ 
as an orbit of $G_2\,(\subseteq{\rm Spin}(7)$\,). 

\begin{prop} \label{prop:3_isotropic} 
A three-dimensional isotropic subspace $p\subseteq U$ 
is closed under the octonionic cross product if and only if $p\in Q(\subseteq Q')$\,.  
\end{prop}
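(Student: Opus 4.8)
The plan is to recast both conditions in terms of the octonion multiplication and then exploit $G_2$-equivariance. First I would record the following reformulation: \emph{for a $3$-dimensional isotropic $p\subseteq U$, closure under $\times$ is equivalent to $p$ being a (non-unital) subalgebra of the octonion algebra $\mathbb{O}$, and also to $\phi|_p=0$}, where $\phi(a,b,c)=\langle ab,c\rangle\,(=\langle a\times b,c\rangle)$ is the $G_2$-invariant, totally antisymmetric $3$-form on $U$. Indeed, for $x,y\in p$ one has $xy=-\langle x,y\rangle\,\1+x\times y=x\times y$ because $\langle x,y\rangle=0$, so closure under $\times$ is closure under the octonion product; and if the latter holds then, for $z\in p$, $\phi(x,y,z)=\langle xy,z\rangle=0$ since $p$ is isotropic, i.e.\ $\phi|_p=0$. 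Conversely, suppose $\phi|_p=0$; for $x,y\in p$ the product $xy$ then lies in $U$ (its real part is $-\langle x,y\rangle=0$), is null (multiplicativity of the norm), and is orthogonal to $p$ (as $\langle xy,z\rangle=\phi(x,y,z)=0$ for $z\in p$); but $p^{\perp}$ taken inside $U$ is $4$-dimensional with radical $p$, so its only null vectors lie in $p$, whence $xy\in p$.

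It therefore suffices to prove that, among the points of $Q'$ (the null $3$-planes in $U$), those with $\phi|_p=0$ are exactly the ones in $Q$. Since $G_2=\mathrm{Aut}(\mathbb{O})$ preserves $\phi$, the subset $S=\{p\in Q':\phi|_p=0\}$ is $G_2$-invariant, hence a union of $G_2$-orbits; as recalled above, $Q$ is a $G_2$-orbit in $Q'$, and it is the closed one, the only other orbit on $Q'$ being open. So it is enough to check that (i) some $p\in Q$ lies in $S$ --- this forces $S\supseteq Q$ --- and (ii) some $p\in Q'$ lies outside $S$ --- this forces $S\neq Q'$; together, $S=Q$.

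For (i) I would use the explicit form of the embedding $Q\hookrightarrow Q'$, which sends a null direction $[x]\in Q$ to the $3$-plane $p_x=x\,\mathbb{O}\cap U$, the imaginary part of the right ideal generated by $x$. Working in a fixed model of $\mathbb{O}$ over $\C$ --- e.g.\ the Zorn vector-matrix model, with $U$ the trace-free part and $x$ a strictly ``upper-triangular'' null element --- one reads off from the multiplication table that $x\cdot p_x=0$ and $p_x\cdot p_x\subseteq\C\,x$, so $p_x$ is a subalgebra and $p_x\in S$; by equivariance $Q\subseteq S$. For (ii), in the same model one takes $p$ to be the span of an off-diagonal block: it lies in $U$ and is totally isotropic, but the product of two linearly independent vectors in it is a nonzero vector of the opposite block, not in $p$; equivalently, the image of the multiplication map $p\times p\to\mathbb{O}$ is $1$-dimensional for every $p\in Q$ but $3$-dimensional for this $p$, so $p\notin Q$, hence $p\notin S$.

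The proof is essentially formal once a model of $\mathbb{O}$ and the embedding $Q\hookrightarrow Q'$ are pinned down; the only point where I expect genuine effort is (ii) if one prefers not to invoke the classification of $G_2$-orbits on $Q'$. One would then have to show \emph{directly} that every $3$-dimensional totally isotropic subalgebra $p\subseteq\mathbb{O}$ is of the form $p_{x_0}$ for a suitable $0\neq x_0\in p$. A natural approach: the left multiplications $L_x$, $x\in p$, satisfy $L_x^2=L_{x^2}=0$ by alternativity, hence have rank $\le1$ on $p$; analysing the subspace $p\cdot p\subseteq p$ and using the identity $(xy)y=xy^2=0$, one produces a nonzero $x_0\in p$ (spanning $p\cdot p$) with $x_0\,p=0$, so that $p\subseteq\ker L_{x_0}=x_0\,\mathbb{O}$ and $p=p_{x_0}$ by dimension. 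Carrying out this case analysis carefully is the only place where real work is needed.
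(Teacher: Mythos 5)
Your proposal is correct and follows essentially the same route as the paper: both arguments reduce to the fact that $G_2$ has exactly two orbits on $Q'$, with $Q$ the closed one, and then verify closure under the cross product (or its failure) on each orbit. The paper does this verification structurally, via the rank and image of the cross-product map $\Lambda^2p\to U$ on each orbit, whereas you use explicit representatives in the Zorn model together with a (correct, though not strictly necessary, since closure under $\times$ is already a $G_2$-invariant condition) reformulation of closure as the vanishing of the associative $3$-form on $p$.
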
 
\begin{proof}
We claim that the action of $G_2$ on $Q'$ has two orbits, as follows:\\ 
\indent 
\quad(1) $Q$;\\ 
\indent 
\quad(2) $(\pm{\rm i})$-eigenspaces of the octonionic cross multiplication with a unit vector (this orbit 
can be identified with $G_2/{\rm SL}(3)$ - the complexified $6$-sphere).\\ 
\indent 
Indeed, this follows from the fact that $Q'\setminus Q$ is the complexified $6$-sphere.\\ 
\indent 
Now, let $p\in Q'$ and denote by $\a$ the linear map from $\Lambda^2p$ to $U$ given by the octonionic cross product. 
It follows that if $p\in Q$ then $\a$ has rank $1$ and its image is contained by $p$\,, whilst, if $p\in Q'\setminus Q$ then the rank of $\a$ is $3$ 
and if we denote by $q$ its image then $p$ and $q$ are the $(\pm{\rm i})$-eigenspaces of a unit vector orthogonal to $p+q$\,. 
\end{proof} 

\begin{rem} \label{rem:3_isotropic} 
The embedding of $Q$ into $Q'$ is determined by the octonionic cross product as follows. Let $\ell\in Q(\subseteq PU)$, 
and choose a nondegenerate associative space $p$ that contains $\ell$\,. Then the image of $\ell$ into $Q'$ is equal to $\ell+(\ell\times p^{\perp})$\,, 
where $\times$ denotes the octonionic cross product.  
\end{rem} 

\indent 
We call the points of $Q\subseteq Q'$ \emph{isotropic associative spaces of dimension $3$}\,. 

\begin{cor} \label{cor:2_isotropic} 
{\rm (i)} Any self-dual space (contained by a nondegenerate coassociative space) is contained by a unique isotropic associative space of dimension $3$\,.\\  
\indent 
{\rm (ii)} Any anti-self-dual space is contained by a family of isotropic associative spaces of dimension $3$\,, parametrized by the projective line.\\ 
\indent 
{\rm (iii)} Any isotropic associative space of dimension $3$ containes a family of anti-self-dual spaces, parametrized by the projective line.   
\end{cor}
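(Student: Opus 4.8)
The plan is to turn each of (i)--(iii) into linear algebra of the octonionic cross product on isotropic subspaces, using the standard bilinear identities
\[
\langle x\times y,z\rangle=\varphi(x,y,z)\quad\text{is totally alternating},\qquad \langle x\times y,x\times y\rangle=\langle x,x\rangle\langle y,y\rangle-\langle x,y\rangle^{2},
\]
\[
x\times(x\times y)=\langle x,y\rangle\,x-\langle x,x\rangle\,y,
\]
together with the following dichotomy, which I would isolate first. Let $\sigma=\langle u,v\rangle\subseteq U$ be an isotropic $2$-plane and put $w:=u\times v$. The identities give $\langle w,u\rangle=\langle w,v\rangle=\langle w,w\rangle=0$ and $u\times w=v\times w=0$ (using $\langle u,u\rangle=\langle u,v\rangle=\langle v,v\rangle=0$). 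Hence either $w=0$, or $w\notin\sigma$ --- apply $u\times(\cdot)$ and $v\times(\cdot)$ to a putative relation $w=au+bv$ to force $a=b=0$ --- and then $p:=\sigma+\mathbb{C}w$ is a $3$-dimensional isotropic subspace, closed under the cross product, so $p\in Q$. Identifying the first alternative $w=0$ with ``$\sigma$ anti-self-dual'', i.e.\ $\sigma\in Y$ --- equivalently, the two $G_{2}$-orbits on ${\rm Gr}^{0}_{2}(U)$ are separated by the vanishing of $u\times v$, the closed one being $\{\,\sigma:u\times v=0\,\}$ --- is the point everything rests on, and I expect it to be the main obstacle; one can settle it by exhibiting a single isotropic $2$-plane on which the cross product vanishes and noting that $\{\,\sigma:u\times v=0\,\}$ is then a proper, closed, $G_{2}$-invariant subset of the irreducible ${\rm Gr}^{0}_{2}(U)$, on which $G_{2}$ has only two orbits.

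Granting this, (i) and (ii) are short. A self-dual space $\sigma$ has $u\times v\neq0$ --- this is what the hypothesis in (i) amounts to, $\sigma$ lying in the self-dual ruling of a nondegenerate coassociative $4$-space, the anti-self-dual ruling being the one annihilated by the cross product --- so $p:=\sigma+\mathbb{C}(u\times v)\in Q$ contains $\sigma$, and any $p'\in Q$ with $p'\supseteq\sigma$ must contain $u\times v$, hence $p'\supseteq p$ and $p'=p$ by dimension. For (ii), let $\sigma$ be anti-self-dual, so $u\times v=0$. Any $3$-dimensional isotropic $p\supseteq\sigma$ lies in $\sigma^{\perp}$ and cuts out an isotropic line $p/\sigma$ in the $3$-dimensional \emph{nondegenerate} quotient $\sigma^{\perp}/\sigma$; conversely the preimage of any such line is a $3$-dimensional isotropic $p\supseteq\sigma$. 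So these $p$ are parametrized by the conic of isotropic directions of $\sigma^{\perp}/\sigma$, a projective line. Finally each such $p$ is associative: writing $p=\sigma+\mathbb{C}w$, the vector $u\times w$ is isotropic (third identity, $\langle u,u\rangle=\langle u,w\rangle=0$) and orthogonal to $p$ (total antisymmetry of $\varphi$, together with $u\times v=0$), hence $u\times w\in p^{\perp}$; since $p$ is $3$-dimensional and isotropic, $p^{\perp}/p$ is $1$-dimensional and nondegenerate, so every isotropic vector of $p^{\perp}$ already lies in $p$, whence $u\times w\in p$ and, likewise, $v\times w\in p$.

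For (iii), fix $p\in Q$. By the argument proving Proposition \ref{prop:3_isotropic}, the cross product induces a linear map $\alpha\colon\Lambda^{2}p\to U$ of rank $1$ with image in $p$; in particular $\ker\alpha\subseteq\Lambda^{2}p$ is $2$-dimensional. As $\dim p=3$, every element of $\Lambda^{2}p$ is decomposable, so the $2$-planes $\sigma\subseteq p$ correspond bijectively to the lines of $\Lambda^{2}p$, and by the dichotomy $\sigma$ is anti-self-dual exactly when the corresponding line lies in $\ker\alpha$; hence the anti-self-dual spaces contained in $p$ form $\mathbb{P}(\ker\alpha)\cong\mathbb{P}^{1}$. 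Apart from the octonionic identities, the only structural inputs are the dichotomy above and the observation that $p^{\perp}/p$ is $1$-dimensional and nondegenerate for a $3$-dimensional isotropic $p$; alternatively one could run all three parts by $G_{2}$-homogeneity, reducing them to incidence computations at one base point of $Q$, respectively of $Y$.
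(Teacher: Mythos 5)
Your computations are all correct, but you are organising the argument in the reverse order from the paper: you put at the base the dichotomy ``$\sigma$ anti-self-dual $\Leftrightarrow\sigma\times\sigma=\{0\}$'', which is precisely Theorem \ref{thm:isotropic_associative} and is proved in the paper \emph{after} this corollary, whereas the paper's proof of the corollary relies instead on Proposition \ref{prop:3_isotropic}, Remark \ref{rem:3_isotropic}, and explicit normal forms (a self-dual plane has $p\times p$ one-dimensional; an anti-self-dual plane is of the form $\ell+x\times\ell$). Your route has genuine advantages where it is complete: the uniqueness in (i) is argued explicitly (the paper leaves it implicit in the word ``determines''); your proof in (ii) that \emph{every} three-dimensional isotropic space containing an anti-self-dual plane is associative, via the observation that $p^{\perp}/p$ is one-dimensional and nondegenerate, is a clean replacement for the paper's normal form, and your parametrisation by the conic of isotropic directions in $\sigma^{\perp}/\sigma$ is equivalent to the paper's parametrisation by $P\sigma$; and in (iii) your $P(\ker\alpha)$ is the same pencil as the paper's planes through the axis $q\times q={\rm im}\,\alpha$.

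The one step you have not closed is the one you flag yourself: the identification of ``anti-self-dual'' with ``$\sigma\times\sigma=\{0\}$''. Your proposed shortcut needs two inputs you do not supply: an explicit isotropic $2$-plane with vanishing cross product (and one with non-vanishing cross product), and the fact that $G_2$ acts on ${\rm Gr}_2^0(U)$ with exactly two orbits, the closed one being $Y$. The second input is the delicate one here, because the paper deduces the two-orbit statement on ${\rm Gr}_2^0(U)$ \emph{from} the results of Section \ref{section:iso_octo}; importing it makes your argument circular relative to the paper's development unless you establish it independently. A self-contained fix is available along the lines of your own parenthetical in (i): for a nondegenerate coassociative $4$-space $W$ one has $W\times W\subseteq W^{\perp}$, and the induced map $\Lambda^2W\to W^{\perp}$ annihilates exactly one of $\Lambda^2_{\pm}W$, so the cross product vanishes precisely on one of the two rulings of isotropic $2$-planes of $W$; combined with the fact that every isotropic $2$-plane lies in some nondegenerate coassociative $4$-space, this proves the dichotomy directly. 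With that lemma in place, the rest of your proof goes through.
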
 
\begin{proof}
(i) If $p$ is self-dual then $p\times p$ has dimension $1$ and determines (as in Remark \ref{rem:3_isotropic}\,) 
an isotropic associative space, of dimension $3$\,, containing $p$\,.\\   
\indent 
(ii) Any anti-self-dual space is of the form $\ell+x\times\ell$ where $\ell$ is an isotropic direction 
and $x\in U$ is nondegenerate and orthogonal onto a nondegenerate associative space containing $\ell$\,. 
Consequently, if $p$ is anti-self-dual then it is contained by the isotropic associative spaces of dimension $3$\,, determined by the directions contained by $p$\,.\\ 
\indent 
(iii) By (i) any isotropic associative space of dimension $3$ is of the form $\ell+p$ where $\ell$ is an isotropic direction and $p$ is self-dual 
such that $\ell=p\times p$\,. Then any anti-self-dual space contained by $\ell+p$ is generated by $\ell$ and a direction in $p$\,.   
\end{proof} 

\begin{thm} \label{thm:isotropic_associative} 
Let $p\subseteq U$ be an isotropic subspace of dimension $2$\,. Then the following assertions hold:\\ 
\indent 
{\rm (i)} $p$ is self-dual if and only if $p\times p\neq\{0\}$\,.\\ 
\indent 
{\rm (ii)} $p$ is anti-self-dual if and only if $p\times p=\{0\}$\,. 
\end{thm}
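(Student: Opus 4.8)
The plan is to reduce the statement to two easy consequences of Corollary \ref{cor:2_isotropic}, two short octonionic computations, and one homogeneity fact. Write $p=\langle u,v\rangle$. Since $u\times u=v\times v=0$, the subspace $p\times p$ is spanned by $u\times v$ and so has dimension $0$ or $1$; thus (i) and (ii) are the two halves of a single dichotomy, and I would prove the four implications in turn. I would use throughout the standard identities $\langle x\times y,z\rangle=\varphi(x,y,z)$ (so $x\times y\perp x,y$), $x\times(x\times y)=\langle x,y\rangle x-\langle x,x\rangle y$, and $\langle x\times y,x\times y\rangle=\langle x,x\rangle\langle y,y\rangle-\langle x,y\rangle^{2}$; in particular $u\times v$ is always isotropic and orthogonal to $p$.

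The two ``only if'' implications can be read off from what is already available. If $p$ is self-dual then $\dim(p\times p)=1$ by the computation in the proof of Corollary \ref{cor:2_isotropic}(i), so $p\times p\neq\{0\}$. If $p$ is anti-self-dual then, by the proof of Corollary \ref{cor:2_isotropic}(ii), $p=\langle e\rangle+x\times\langle e\rangle$ with $\langle e\rangle$ isotropic and $x$ orthogonal to a nondegenerate associative space containing $e$; hence $\langle e,e\rangle=\langle e,x\rangle=0$ and $e\times(x\times e)=-e\times(e\times x)=-\bigl(\langle e,x\rangle e-\langle e,e\rangle x\bigr)=0$, i.e. $p\times p=\{0\}$.

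For the ``if'' implications I would argue as follows. Suppose $p\times p=\{0\}$, i.e. $u\times v=0$; fix $0\neq u\in p$ and a nondegenerate associative space $r\ni u$ (such an $r$ exists since $G_2$ is transitive on nonzero isotropic vectors and one of them lies in a nondegenerate associative space). One checks that $\ker(u\times\,\cdot\,)$ is the three-dimensional isotropic associative space $q_u$ that Remark \ref{rem:3_isotropic} attaches to the line $\langle u\rangle$ — indeed $q_u=\langle u\rangle\oplus(u\times r^{\perp})$ is contained in that kernel, which is likewise three-dimensional ($u\times\,\cdot\,$ being skew-adjoint of rank $4$). Since $u\times v=0$ we get $v\in q_u$, hence $p\subseteq q_u$; moreover $\langle u\rangle\times q_u=\{0\}$ and $\langle u\rangle$ is the unique line $D\subseteq q_u$ with $D\times q_u=\{0\}$, so $\langle u\rangle$ is the distinguished line $\ell$ of $q_u$ from the proof of Corollary \ref{cor:2_isotropic}(iii). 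As $p$ is a $2$-plane in $q_u$ through $\ell$, Corollary \ref{cor:2_isotropic}(iii) shows $p$ is anti-self-dual. Dually, suppose $p\times p\neq\{0\}$ and put $w=u\times v\neq0$; then $w$ is isotropic, $w\perp p$, and $w\notin p$ (from $u\times w=v\times w=0$ one sees that $w=au+bv$ forces $a=b=0$), so $q:=p\oplus\langle w\rangle$ is three-dimensional and totally isotropic. Every product of the basis $\{u,v,w\}$ — namely $u\times v=w$, $u\times w=v\times w=0$ — lies in $q$, so $q$ is an isotropic associative space by Proposition \ref{prop:3_isotropic}, with distinguished line $\langle w\rangle$ (as $\langle w\rangle\times q=\{0\}$), to which $p$ is transverse. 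By Corollary \ref{cor:2_isotropic}(iii) the anti-self-dual $2$-planes in $q$ are exactly those through $\langle w\rangle$, so $p$ is not anti-self-dual; and since $p$ lies in the same orbit of the stabiliser of $q$ in $G_2$ as the self-dual $2$-plane $p_0$ exhibited in the proof of Corollary \ref{cor:2_isotropic}(iii), it is self-dual. The four implications together prove (i) and (ii).

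The step I expect to be the real obstacle is the last one: upgrading ``not anti-self-dual'' to ``self-dual'', i.e. showing that there is no third type of isotropic $2$-plane. This rests on the homogeneity fact that the stabiliser in $G_2$ of an isotropic associative space acts transitively on the $2$-planes transverse to its distinguished line — equivalently, that $G_2$ has exactly two orbits on the isotropic $2$-planes of $U$. I would obtain it from the grading of $U$ induced by the parabolic that stabilises an isotropic line, in which the nilradical acts by all translations on the affine space of such transverse $2$-planes. All the rest is the octonionic identities above together with Proposition \ref{prop:3_isotropic} and Corollary \ref{cor:2_isotropic}.
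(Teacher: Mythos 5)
Your argument is correct in substance and arrives at the same structural dichotomy as the paper's proof --- $p\times p\neq\{0\}$ forces $p+(p\times p)$ to be an isotropic associative $3$-space met by $p$ transversally to its distinguished line, while $p\times p=\{0\}$ forces $p$ to contain the distinguished line of an isotropic associative $3$-space containing it --- but by genuinely different intermediate steps. Where the paper rules out $p\times p\subseteq p$ by a coordinate computation in an adapted basis $(x,y,z)$ of an ambient associative $3$-space, you get it (together with the implication ``anti-self-dual $\Rightarrow p\times p=\{0\}$'') directly from the identity $x\times(x\times y)=\langle x,y\rangle x-\langle x,x\rangle y$; this is cleaner and makes the two ``only if'' implications one-line computations. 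For $p\times p=\{0\}$ the paper shows that \emph{every} isotropic $3$-space containing $p$ is associative with distinguished line inside $p$, whereas you produce one such space as $\ker(u\times\,\cdot\,)$; this works, but the rank-$4$ assertion you use parenthetically deserves a line of proof (e.g.\ every element of $\ker(u\times\,\cdot\,)$ is isotropic --- if $\langle y,y\rangle\neq0$ then $y\times\,\cdot\,$ is injective on $y^{\perp}\ni u$, contradicting $y\times u=0$ --- so the kernel is totally isotropic of dimension $\leq3$, and it contains the $3$-space of Remark \ref{rem:3_isotropic}). The step you flag as the real obstacle, upgrading ``not anti-self-dual'' to ``self-dual'', is precisely the step the paper also leaves implicit (its proof stops at ``therefore $p+(p\times p)$ is isotropic associative of dimension $3$'', resp.\ ``the proof follows''), so you are not worse off than the source; your proposed transitivity of the stabiliser of $q$ on the affine plane of $2$-planes transverse to $\ell$ is true and would close it, but as stated it is a plan rather than a proof (one must check that the nilradical surjects onto ${\rm Hom}(q/\ell,\ell)$). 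Note also that, granting that every isotropic $2$-plane is either self-dual or anti-self-dual (which is how these classes are set up, via containment in a nondegenerate coassociative space), your two ``only if'' computations already yield both ``if'' directions by contraposition, so the homogeneity argument can be dispensed with entirely; the only content then missing is the exhaustiveness of that dichotomy, which the paper takes for granted as well.
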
 
\begin{proof}
(i) If $p\times p\subseteq p$ then we can choose a basis $(a,b)$ of $p$ such that $a\times b=a$\,. Furthermore, any isotropic subpace $q$\,, of dimension $3$\,, 
containing $p$ is associative (as $q\cap(q\times q)\neq\{0\}$\,). Thus, we can choose a basis $(x,y,z)$ of $q$ such that $x\times y=z$\,, $y\times z=0$ and $z\times x=0$ 
(consequence of Remark \ref{rem:3_isotropic}\,).\\ 
\indent 
Now, let $(\a,\b,\g)$ and $(\a',\b',\g')$ be the components of $a$ and $b$\,, respectively, with respect to $(x,y,z)$\,. Then 
$a\times b=(\a\b'-\a'\b)z$ which, together with $a=a\times b$\,, implies $\a=\b=0$\,, and, hence, $a\times b=0$\,, a contradiction. 
Therefore $p+(p\times p)$ is isotropic associative of dimension $3$\,.\\ 
\indent 
(ii) If $q\subseteq U$ is isotropic, of dimension $3$\,, containing $p$ then $q\times q$ has dimension at most $2$\,. Thus, $q$ is associative, 
determined by the isotropic direction $q\times q$\,. Hence, $q\times q\subseteq p$ (as, otherwise, we would have $p\times p\neq\{0\}$\,). 
The proof follows. 
\end{proof}

\begin{rem} 
1) Let $p,q\subseteq U$ be isotropic associative of dimension $3$\,. Then either $p=q$\,, or $p\cap q=\{0\}$\,, or $p\cap q$ is an anti-self-dual space.\\ 
\indent 
2) Let $V\subseteq U$ be a nondegenerate vector subspace of codimension $1$\,, and let $p\subseteq U$ be the isotropic associative space of dimension $3$ 
determined by the isotropic direction $\ell\subseteq U$. Then one and only one of the following occurs:\\ 
\indent 
\quad(i) $p\subseteq V$;\\ 
\indent 
\quad(ii) $p\cap V$ is anti-self-dual;\\ 
\indent 
\quad(iii) $p\cap V$ is self-dual.\\ 
\indent 
To understand when each case occurs, endow $U$ with a conjugation preserving $V$ and compatible with the octonionic cross product. 
Then there exists a unique real associative space $q$ that contains $\ell$ and the following hold:\\ 
\indent 
\quad(a) Case (i) occurs if and only if $q\supseteq V^{\perp}$ and $\ell\subseteq V$;\\ 
\indent 
\quad(b) Case (ii) occurs if and only if $q\subseteq V$ (and $\ell\subseteq V$);\\ 
\indent 
\quad(c) Case (iii) occurs if and only if $q\nsubseteq V$ and $\ell\nsubseteq V$.\\ 
\indent 
Consequently, we, also, have the following:\\ 
\indent 
\quad$\bullet$ $p\subseteq V$ if and only if $\ell\subseteq V$ and contained by an eigenspace of the orthogonal complex structure on $V$ (given by the 
octonionic cross product);\\ 
\indent 
\quad$\bullet$ $p\cap V$ is anti-self-dual if and only if $\ell\subseteq V$ but not contained by an eigenspace of the orthogonal complex structure on $V$;\\ 
\indent 
\quad$\bullet$ $p\cap V$ is self-dual if and only if $\ell\nsubseteq V$. 
\end{rem}

\section{Obstructions to the integrability of almost twistorial structures} \label{section:aG2_atwist} 

\indent 
We continue with the same notations as in Section \ref{section:iso_octo}\,. Let $E$ be the tautological vector bundle over $Q$ given by its embedding  
into $Q'$. Then the tautological line bundle $L$ over $Q\,(\subseteq PU)$ is a subbundle of $E$ and, on denoting $U_+=E/L$\,, we have an equivariant 
exact sequence of homogeneous vector bundles 
\begin{equation} \label{e:first_exact_seq} 
0\longrightarrow L\longrightarrow E\longrightarrow U_+\longrightarrow0\;. 
\end{equation} 
\indent 
By using \cite{Bott-BBW}\,, we duce that $H^j(U_+^*)=0$\,, for any $j\in\mathbb{N}$\,. Together with the Kodaira vanishing theorem and by 
passing to the exact sequence of cohomology groups of the dual of \eqref{e:first_exact_seq}\,, this implies $H^0(E^*)=U_{1,0}$ (equivariantly), 
and $H^j(E^*)=0$\,, for any $j\in\mathbb{N}\setminus\{0\}$\,, where $U_{m,n}$ is the irreducible representation space of $G_2$ corresponding to $(m,n)\in\mathbb{N}^2$ 
(note that, $U_{1,0}=U$ the space of imaginary octonions).\\ 
\indent 
We, also, have the following equivariant exact sequence 
\begin{equation} \label{e:fundamental_exact_seq} 
0\longrightarrow E^{\perp}\longrightarrow Q\times U_{1,0}\longrightarrow E^*\longrightarrow0\;. 
\end{equation} 
Hence, $H^j(E^{\perp})=0$\,, for any $j\in\mathbb{N}$\,.
Furthermore, by restricting $E^{\perp}/E$ to any associative conic we obtain that $E^{\perp}/E$ is a trivial line bundle. 
Thus, we have an equivariant exact sequence $0\longrightarrow Q\times U_{0,0}\longrightarrow (E^{\perp})^*\longrightarrow E^*\longrightarrow 0$\,, 
from which we deduce $H^0((E^{\perp})^*)=U_{0,0}\oplus U_{1,0}$\,, and $H^j((E^{\perp})^*)=0$\,, for any $j\in\mathbb{N}\setminus\{0\}$\,. Consequently, 
dualizing \eqref{e:fundamental_exact_seq}\,, passing to the cohomology exact sequence, and then dualizing again, we obtain (cf.~\cite{Pan-qgfs}\,) 
\begin{equation} \label{e:fundamental_exact_seq_2} 
0\longrightarrow E^{\perp}\longrightarrow Q\times(U_{0,0}\oplus U_{1,0})\longrightarrow(E^{\perp})^*\longrightarrow0\;,  
\end{equation} 
together with the obvious morphism from \eqref{e:fundamental_exact_seq_2} to \eqref{e:fundamental_exact_seq}\,.
These two equivariant exact sequences give the Euclidean twistorial structures we are interested in.\\ 
\indent 
From Section \ref{section:iso_octo} it follows that $G_2$ has two orbits on ${\rm Gr}_2^0(U)$\,: the space $Y$ of anti-self-dual spaces and the space of self-dual spaces.  
The former is just the twistor space of the Wolf space determined by $G_2$ (the closed adjoint orbit into $P\mathfrak{g}_2$); in particular, $Y$ is compact.\\ 
\indent 
Let $U_-$ be the tautological vector bundle over $Y$. Denote $Z=PU_+=PU_-$ and let $\p_{\pm}$ be the projections from $Z$ onto $Q$ and $Y$, respectively. 
Let $L_+=\p_+^*L$ and, similarly, let $L_-$ be the pull-back by $\p_-$  of the tautological line bundle over $Y$ (given by the Pl\"ucker embedding). 

\begin{prop} 
{\rm (i)} $L_+$ is the tautological line bundle over $PU_-$\,.\\ 
\indent 
{\rm (ii)} $L_-$ is the tautological line bundle over $P(L_+\otimes U_+)$\,.
\end{prop}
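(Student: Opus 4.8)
\indent
The plan is to carry out everything on $Z$ realised as the flag manifold whose points are the pairs $(\ell,a)$\,, with $\ell$ an isotropic direction and $a$ an anti-self-dual space such that $\ell\subseteq a$\,; by Corollary~\ref{cor:2_isotropic} such an $a$ is then contained in the isotropic associative space $E_\ell$ of dimension $3$ determined by $\ell$\,, and the two fibrations of $Z$ are $\pi_+(\ell,a)=\ell\in Q$ and $\pi_-(\ell,a)=a\in Y$\,. Under the identification $Z=PU_+$ the point $(\ell,a)$ corresponds to the line $a/\ell\subseteq E_\ell/\ell=(U_+)_\ell$\,, and under $Z=PU_-$ it corresponds to the line $\ell\subseteq a=(U_-)_a$ (that these are bijections is exactly Corollary~\ref{cor:2_isotropic}\,(ii),(iii), together with the splitting $E_\ell=\ell\oplus p$\,, $p$ self-dual, from its proof and from Remark~\ref{rem:3_isotropic}\,). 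I would then view all bundles in sight as subbundles, or subquotients, of the trivial bundle $Z\times U_{1,0}$\,: so $L_+=\pi_+^*L$ has fibre $\ell$ at $(\ell,a)$\,, $\pi_+^*E$ has fibre $E_\ell$\,, $\pi_-^*U_-$ has fibre $a$\,, and $L_-=\pi_-^*(\Lambda^2U_-)$ has fibre $\Lambda^2a$\,. With this in place, part~(i) is immediate: since $\ell\subseteq a$\,, $L_+$ is a line subbundle of $\pi_-^*U_-$\,, and by the description of $Z=PU_-$ it picks out at each point the tautological line; hence $L_+$ is the tautological line bundle of $PU_-$\,.

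For part~(ii) I would use, over $Z=PU_-$\,, the tautological exact sequence $0\to L_+\to\pi_-^*U_-\to\mathscr{Q}\to0$\,, with $\mathscr{Q}:=\pi_-^*U_-/L_+$ a line bundle; taking second exterior powers gives $L_-=\pi_-^*(\Lambda^2U_-)=L_+\otimes\mathscr{Q}$\,, so it remains only to identify $\mathscr{Q}$\,. From $\ell\subseteq a\subseteq E_\ell$ one gets $L_+=\pi_+^*L\subseteq\pi_-^*U_-\subseteq\pi_+^*E$\,, whence $\mathscr{Q}$ is a line subbundle of $\pi_+^*E/\pi_+^*L=\pi_+^*U_+$ with fibre $a/\ell\subseteq E_\ell/\ell=(U_+)_\ell$ at $(\ell,a)$\,; by the description of $Z=PU_+$ this is exactly the tautological line of $PU_+$\,, i.e.\ $\mathscr{Q}$ is the tautological line bundle of $PU_+$\,. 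Therefore $L_-=L_+\otimes\mathscr{Q}$ is the tautological line bundle of $P(L_+\otimes U_+)$\,, via the standard isomorphism $P(V)\cong P(\mathscr{L}\otimes V)$ (for a line bundle $\mathscr{L}$) under which the tautological line bundle of $P(\mathscr{L}\otimes V)$ becomes $\pi^*\mathscr{L}\otimes\mathcal{O}_{P(V)}(-1)$\,, applied with $V=U_+$ and $\mathscr{L}=L$ over $Q$\,.

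The only non-formal input is Corollary~\ref{cor:2_isotropic} (with Remark~\ref{rem:3_isotropic}\,), which supplies the chain $\ell\subseteq a\subseteq E_\ell$ and the bijectivity of the two correspondences defining the $\mathbb{P}^1$-bundle structures on $Z$\,; granting this, the rest is bookkeeping with tautological sequences. The step I expect to need most care is verifying that the two $\mathbb{P}^1$-bundle structures are intertwined exactly as the flag picture predicts — so that $\mathscr{Q}=\pi_-^*U_-/L_+$ genuinely is the tautological subbundle of $\pi_+^*U_+$ — and then keeping straight the twist by $L_+$ which distinguishes the tautological bundles of $PU_+$ and of $P(L_+\otimes U_+)$\,.
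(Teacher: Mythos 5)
Your argument is correct and is essentially the paper's: both rest on viewing $Z$ as the flag manifold of pairs $(\ell,a)$ with $\ell\subseteq a\subseteq E_\ell$ and reading off the line bundles fibrewise from the tautological sequences. The only (cosmetic) difference is that you work directly with the subquotient $\pi_-^*U_-/L_+\subseteq\pi_+^*U_+$ (fibre $a/\ell$) and the determinant identity $L_-=L_+\otimes\bigl(\pi_-^*U_-/L_+\bigr)$, whereas the paper phrases the same computation dually, via the quotient $\pi_+^*E/\pi_-^*(U_-)=L_+^2\otimes L_-^*$ and the identification $U_+^*=L_+^*\otimes U_+$.
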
   
\begin{proof}
Assertion (i) is obvious. To prove (ii)\,, note that, $\p_+^*E/\p_-^*(U_-)$ is the dual of the tautological line bundle over $P(U_+^*)$\,. 
As $U_+^*=(\Lambda^2U_+^*)\otimes U_+=L_+^*\otimes U_+$ and $\p_+^*E/\p_-^*(U_-)=L_+^2\otimes L_-^*$\,, the proof follows quickly. 
\end{proof} 

\begin{cor} 
For any $m,n\in\mathbb{N}$\,, we have $U_{m,n}=H^0\bigl((L_+^*)^m\otimes(L_-^*)^n\bigr)$. 
\end{cor}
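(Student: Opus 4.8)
The plan is to identify the line bundles $L_+^*$ and $L_-^*$ as the ample generators of the Picard group of the flag manifold $Z$ and then apply the Borel--Weil theorem. First I would recall that, by the previous proposition, $L_+$ is the tautological line bundle over $PU_-$ and $L_-$ is the tautological line bundle over $P(L_+\otimes U_+)$; since $Z=PU_+=PU_-$ is a generalized flag manifold for $G_2$ (it fibres over both $Q$ and $Y$, and one checks it is the full flag manifold $G_2/B$ associated to the two long/short parabolics, or at any rate a rational homogeneous space whose Picard group is generated by $L_+^*$ and $L_-^*$), the characters obtained by $(m,n)\mapsto (L_+^*)^m\otimes(L_-^*)^n$ exhaust the dominant weights. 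The key point is that $L_+^*$ and $L_-^*$ correspond to the two fundamental weights of $\mathfrak{g}_2$, so that $(L_+^*)^m\otimes(L_-^*)^n$ is the homogeneous line bundle associated with the dominant weight $m\varpi_1+n\varpi_2$ (with $U_{1,0}=U$ fixing the normalization so that $\varpi_1$ is the weight of the $7$-dimensional representation).

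The central computation is then to show $H^0\bigl((L_+^*)^m\otimes(L_-^*)^n\bigr)=U_{m,n}$ and, ideally, that all higher cohomology vanishes (the latter is not needed for the statement but is the natural by-product). For this I would use Bott's theorem, exactly as invoked earlier in the section via \cite{Bott-BBW}: for a dominant line bundle on a flag manifold the only nonzero cohomology is $H^0$, and it equals the irreducible representation with highest weight given by that of the line bundle. The cases $m=0$ or $n=0$ are already secured by the pushforward computations done above: pushing $(L_+^*)^m$ down to $Q$ gives $\mathrm{Sym}^m$ of the dual tautological bundle on $Q\subseteq PU$, whose sections are $H^0(Q,\ol(m))=U_{m,0}$ by the analogous Bott/Borel--Weil statement on $Q$, and similarly $(L_-^*)^n$ pushes to $Y$ with $H^0=U_{0,n}$; the mixed case $m,n>0$ is handled uniformly on $Z$ once the weight of $(L_+^*)^m\otimes(L_-^*)^n$ is identified with $m\varpi_1+n\varpi_2$.

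Concretely I would organise it as follows. Step one: establish that $Z$ is the $G_2$-flag manifold $G_2/B$ (or at least a homogeneous space on which $L_+^*,L_-^*$ generate the ample cone), using that $\p_+:Z\to Q$ and $\p_-:Z\to Y$ are the two fibrations onto the two fundamental generalized Grassmannians and that $Q=G_2/P_1$, $Y=G_2/P_2$. Step two: identify the homogeneous line bundles: $\p_+^*L^* $ is the line bundle $\ol(1)$ on $Q$, whose weight is $\varpi_1$ (the highest weight of $U=U_{1,0}$), and $\p_-^*$ of the Plücker line bundle on $Y$ has weight $\varpi_2$; hence $(L_+^*)^m\otimes(L_-^*)^n$ has weight $m\varpi_1+n\varpi_2$, which is dominant for all $m,n\in\mathbb{N}$. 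Step three: invoke Bott's theorem on $G_2/B$ to conclude $H^0\bigl((L_+^*)^m\otimes(L_-^*)^n\bigr)=U_{m,n}$ (and $H^j=0$ for $j>0$).

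The main obstacle I expect is Step two — pinning down precisely that the weight of $L_+^*$ is the fundamental weight $\varpi_1$ attached to the $7$-dimensional representation and that of $L_-^*$ is $\varpi_2$, rather than, say, swapped, or off by the Weyl-vector shift, or scaled by the ratio of long to short roots. This requires a careful bookkeeping of the $G_2$-module structure: one pins down $L_+^*$ via $H^0(Q,L_+^*)=H^0(PU,\ol(1))\!\restriction_{Q}=U^*\cong U=U_{1,0}$ (using the self-duality of $U$), which forces the weight to be $\varpi_1$; and one pins down $L_-^*$ via the Plücker embedding $Y\hookrightarrow P(\Lambda^2 U_{1,0})$ and the fact that $Y$ sits inside the adjoint variety, so $H^0(Y,L_-^*)$ is the fundamental representation $U_{0,1}=\mathfrak{g}_2$. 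Once these two base cases are in hand, the identification of the weight of the general tensor product and the application of Bott's theorem are routine.
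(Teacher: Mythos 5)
Your proposal is correct and follows essentially the same route as the paper: identify $Z$ as the generalized complete flag manifold of $G_2$, recognise $L_+^*$ and $L_-^*$ as the line bundles attached to the two fundamental weights (normalised by $H^0(L_+^*)=U_{1,0}$ and $H^0(L_-^*)=U_{0,1}$), and conclude by the Borel--Weil theorem. The paper's proof is just a terser version of this, citing a description of $\mathfrak{g}_2$ from Salamon's book to see that $Z$ is the flag manifold; your extra care in pinning down which line bundle carries which fundamental weight is exactly the bookkeeping the paper leaves implicit.
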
 
\begin{proof}
By using a description of $\mathfrak{g}_2$ from \cite{Sal-holo_book} (see \cite{Pan-eholon}\,) we deduce that $Z$ is the generalized complete flag manifold of $G_2$\,. 
The proof follows from the Borel-Weil theorem.  
\end{proof}

\begin{prop} \label{prop:H^0Lambda^2} 
{\rm (i)} $H^0(\Lambda^2E^*)=U_{1,0}\oplus U_{0,1}$\,.\\ 
\indent 
{\rm (ii)} $H^0(\Lambda^2(E^{\perp})^*)=U_{1,0}\oplus U_{1,0}\oplus U_{0,1}$\,.
\end{prop}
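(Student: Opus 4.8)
The plan is to compute both cohomology spaces by exploiting the exact sequences \eqref{e:first_exact_seq} and \eqref{e:fundamental_exact_seq}, together with the already-established facts about $H^j(U_+^*)$, $H^j(E^*)$, and $H^j(E^\perp)$, and finally the Borel--Weil description $U_{m,n}=H^0\bigl((L_+^*)^m\otimes(L_-^*)^n\bigr)$. For part (i), I would take the second exterior power of \eqref{e:first_exact_seq}, obtaining the short exact sequence
\[
0\longrightarrow L\otimes U_+^*\longrightarrow \Lambda^2E^*\longrightarrow \Lambda^2U_+^*\longrightarrow 0,
\]
after identifying $\Lambda^2E^*$'s sub/quotient pieces via the filtration on $\Lambda^2E^*$ induced by $L^*\subseteq E^*$ — more precisely, $0\to\Lambda^2U_+^*\to\Lambda^2E^*\to L^*\otimes U_+^*\to 0$ in the dual picture, so I must be careful to dualise consistently. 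Since $U_+$ has rank $2$, $\Lambda^2U_+^*=L_+^*$ (on $Q$, $U_+=U$ restricted appropriately, and $\Lambda^2U_+^*$ is the determinant line bundle), so $H^0(\Lambda^2U_+^*)=H^0(L_+^*)=U_{1,0}$ and higher cohomology vanishes by Bott. For the subbundle $L^*\otimes U_+^*=L^*\otimes L_+\otimes U_+$: using $U_+^*=L_+^*\otimes U_+$ this is $L^*\otimes U_+$; I then need $H^j(L^*\otimes U_+)$, which should again be computable from \eqref{e:first_exact_seq} tensored by $L^*$, or directly identified with a homogeneous bundle and fed to Bott's theorem, yielding $H^0=U_{0,1}$ and vanishing otherwise. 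Splicing these into the long exact sequence of cohomology then gives $H^0(\Lambda^2E^*)=U_{1,0}\oplus U_{0,1}$.

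For part (ii), the analogous strategy applies to the extension relating $(E^\perp)^*$ and $E^*$. Recall from the text that $0\to Q\times U_{0,0}\to(E^\perp)^*\to E^*\to 0$. Taking $\Lambda^2$ of this rank-increasing-by-one extension gives
\[
0\longrightarrow E^*\longrightarrow \Lambda^2(E^\perp)^*\longrightarrow \Lambda^2E^*\longrightarrow 0,
\]
since wedging the trivial line $U_{0,0}$ against $E^*$ produces a copy of $E^*$ as the sub, and the quotient is $\Lambda^2E^*$. Now $H^0(E^*)=U_{1,0}$ with all higher cohomology vanishing, and by part (i) $H^0(\Lambda^2E^*)=U_{1,0}\oplus U_{0,1}$; the connecting maps $H^j(\Lambda^2E^*)\to H^{j+1}(E^*)$ all vanish for $j\geq 0$ because the target vanishes for $j\geq 1$ and $H^0$ into $H^1$ is into zero. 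Hence $H^0(\Lambda^2(E^\perp)^*)=U_{1,0}\oplus(U_{1,0}\oplus U_{0,1})=U_{1,0}\oplus U_{1,0}\oplus U_{0,1}$, as claimed.

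The main obstacle I anticipate is the bookkeeping in identifying the homogeneous line and vector bundles that appear as sub/quotients after taking exterior powers — in particular getting the right $(m,n)$ labels on the pieces like $L^*\otimes U_+$ so that Bott's theorem (or the Borel--Weil corollary above) delivers exactly $U_{0,1}$ rather than some other irreducible. This requires pinning down precisely how $L$, $E$, $U_+$, $L_+$, $L_-$ restrict and relate as $G_2$-homogeneous bundles over $Q$ (equivalently, tracking weights on the flag manifold $Z$). A secondary subtlety is ensuring the dualisation of \eqref{e:first_exact_seq} and of the filtration on $\Lambda^2$ is done consistently, so that the sub- and quotient-bundle roles are not accidentally swapped; once the equivariant identifications are fixed, the cohomology computation is a routine diagram chase through the long exact sequences.
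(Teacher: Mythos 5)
Your overall route is the one the paper itself takes: for (i), the filtration of $\Lambda^2E^*$ induced by $L\subseteq E$ is exactly the argument of Remark \ref{rem:Lambda^2} (the Proposition's own proof of (i) is phrased differently, via the restriction map to $\Lambda^2U_-^*$ and the projection $\Lambda^2U_{1,0}\to U_{0,1}$, but the Remark records your version); for (ii), the sequence $0\to E^*\to\Lambda^2(E^{\perp})^*\to\Lambda^2E^*\to0$ is precisely what the paper uses, and that part of your argument is correct as written.

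There is, however, a concrete error in (i), and it sits exactly in the spot you flagged as the anticipated obstacle. The correct dual filtration is $0\to\Lambda^2U_+^*\to\Lambda^2E^*\to L^*\otimes U_+^*\to0$, and the quotient must be identified as $L^*\otimes U_+^*=L^*\otimes L_+^*\otimes U_+=(L_+^*)^2\otimes U_+$ (using $U_+^*=\Lambda^2U_+^*\otimes U_+=L_+^*\otimes U_+$ and $L_+=L$ on $Q$); Bott's theorem applied to $(L_+^*)^2\otimes U_+$ is what yields $H^0=U_{0,1}$ with vanishing higher cohomology, consistent with Remark \ref{rem:Lambda^2}, where the same piece appears as $(L_+^*)^2\otimes U_+$ after writing $\Lambda^2E^*=(L_+^*)^2\otimes E$ and tensoring \eqref{e:first_exact_seq}. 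You instead reduce the quotient to $L^*\otimes U_+$, which is a different bundle: $L^*\otimes U_+=L^*\otimes L_+\otimes U_+^*=U_+^*$, and the paper has already established that $H^j(U_+^*)=0$ for all $j$. So if your identification were fed to Bott literally, the $U_{0,1}$ summand would disappear and you would obtain $H^0(\Lambda^2E^*)=U_{1,0}$ only. The slip comes from substituting $U_+^*=L_+\otimes U_+$ instead of $U_+^*=L_+^*\otimes U_+$; once that twist is corrected the computation closes and the rest of your argument, including all of (ii), goes through.
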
 
\begin{proof}
Obviously, $\p_-^*(U_-)$ is a subbundle of $\p_+^*E$. Also, the projection from $\Lambda^2U_{1,0}$ onto $U_{0,1}$ decomposes as 
the composition of an equivariant linear map from the former to $H^0(\Lambda^2E^*)$ followed by the (equivariant) linear map between the 
spaces of sections of $\Lambda^2E^*$ and $\Lambda^2U_-^*$.  
Furthermore, the kernel of the linear map between the spaces of sections of $\Lambda^2E^*$ and $\Lambda^2U_-^*$ is formed of the sections of $\Lambda^2U_+^*=L^*$, 
and the proof of (i) quickly follows.\\  
\indent 
Assertion (ii) follows from (i) and the fact that $E^*$ is the kernel of the vector bundles morphisms from $\Lambda^2(E^{\perp})^*$ onto $\Lambda^2E^*$. 
\end{proof} 

\begin{rem} \label{rem:Lambda^2} 
We can improve Proposition \ref{prop:H^0Lambda^2} as follows. Firstly, as $\rank E=3$\,, we have $\Lambda^2E^*=(\Lambda^3E^*)\otimes E=(L_+^*)^2\otimes E$\,. 
Together with \eqref{e:first_exact_seq}\,, this gives $$0\longrightarrow L_+^*\longrightarrow(L_+^*)^2\otimes E\longrightarrow(L_+^*)^2\otimes U_+\longrightarrow0\;,$$  
whose cohomology exact sequence gives (i) of Proposition \ref{prop:H^0Lambda^2}\,, by using \cite{Bott-BBW}\,; moreover, this way, we, also, obtain that  
$H^j(\Lambda^2E^*)=0$\,, for any $j\geq1$\,.\\ 
\indent 
Consequently, from 
\begin{equation} \label{e:for_Lambda^2E^perp}
0\longrightarrow E^*\longrightarrow\Lambda^2(E^{\perp})^*\longrightarrow(L_+^*)^2\otimes E\longrightarrow0
\end{equation} 
we, similarly, obtain (ii) of Proposition \ref{prop:H^0Lambda^2} and $H^j(\Lambda^2(E^{\perp})^*)=0$\,, for any $j\geq1$\,.
\end{rem} 

\begin{thm} \label{thm:for_torsion} 
We have 
\begin{equation} \label{e:for_torsion} 
\begin{split} 
H^0\bigl(E^*\otimes\Lambda^2(E^{\perp})^*\bigr)&=U_{1,0}\oplus kU_{0,1}\oplus3U_{2,0}\oplus U_{1,1}\;,\\ 
H^0\bigl((E^{\perp})^*\otimes\Lambda^2(E^{\perp})^*\bigr)&=3U_{1,0}\oplus(k+1)U_{0,1}\oplus3U_{2,0}\oplus U_{1,1}\;,\\
\end{split} 
\end{equation}
where $k\in\{2,3\}$\,. 
\end{thm}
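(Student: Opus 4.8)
The plan is to compute these two cohomology groups by the same method used throughout Section \ref{section:aG2_atwist}: filter the relevant bundles by equivariant exact sequences whose terms are (twists of) the line bundles $L_+$ and $L_-$, and apply the Bott--Borel--Weil theorem \cite{Bott-BBW} together with the identifications $U_{m,n}=H^0\bigl((L_+^*)^m\otimes(L_-^*)^n\bigr)$ and the vanishing results already established. First I would record the building blocks: from \eqref{e:first_exact_seq} and Remark \ref{rem:Lambda^2} we have $\Lambda^2(E^{\perp})^*$ sitting in \eqref{e:for_Lambda^2E^perp}, and $\Lambda^2E^*=(L_+^*)^2\otimes E$ with $E$ itself an extension of $U_+$ by $L$; note $U_+=L_+^*\otimes(\text{something})$ can be read off from the Proposition identifying $L_-$ as the tautological bundle over $P(L_+\otimes U_+)$, so $U_+$ is, up to the twist, governed by $L_+$ and $L_-$. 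I would also need $E^{\perp}$, which by \eqref{e:fundamental_exact_seq} is the kernel of $Q\times U_{1,0}\to E^*$, and $E^*$ from the dual of \eqref{e:first_exact_seq}.

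Next I would tensor \eqref{e:for_Lambda^2E^perp} by $E^*$ and by $(E^{\perp})^*$ respectively, obtaining for the first group a four-term filtration with graded pieces built from $E^*\otimes E^*$, $(L_+^*)^2\otimes E^*\otimes E$, etc., and then further refine each such piece using \eqref{e:first_exact_seq} and its dual until every graded piece is a homogeneous bundle of the form $(L_+^*)^a\otimes(L_-^*)^b$ over the flag manifold $Z=PU_+$ pulled back appropriately (or a bundle over $Q$ to which Bott's theorem applies directly). For each piece I would apply \cite{Bott-BBW}: most pieces will either have all cohomology vanishing or contribute a single $H^0$ equal to some $U_{m,n}$ (by the Borel--Weil part), and I would keep track of the connecting homomorphisms in the long exact sequences to see which contributions survive. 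The decomposition $\Lambda^2U_{1,0}=U_{1,0}\oplus U_{0,1}$ and the tensor-product decompositions $U_{1,0}\otimes U_{1,0}$, $U_{1,0}\otimes U_{0,1}$ of $G_2$-representations are the combinatorial input that should reproduce the stated multiplicities $3U_{2,0}$, $U_{1,1}$, and the $U_{1,0}$, $U_{0,1}$ terms.

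The indeterminacy $k\in\{2,3\}$ is the signal of where the main obstacle lies: after assembling all the long exact sequences there will be one connecting homomorphism (between an $H^0$ on one graded piece and an $H^1$ on the next, or between two $H^0$'s) whose rank is not forced by dimension count or by equivariance alone — it is an equivariant map between spaces each containing a copy of $U_{0,1}$, and whether that particular $U_{0,1}$ is in the kernel or not determines $k$. I would isolate this map, describe it as explicitly as possible (it should be induced by the octonionic cross product, via the morphism from \eqref{e:fundamental_exact_seq_2} to \eqref{e:fundamental_exact_seq}), and note that resolving it would require either a direct computation in the octonion algebra or a finer use of the geometry of associative and (anti-)self-dual subspaces from Section \ref{section:iso_octo}. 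Since the rest of the argument does not need the exact value of $k$, I would state the result with $k\in\{2,3\}$ and defer the sharpening, which is exactly how the theorem is phrased.

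Concretely, then, the proof I would write goes: (1) reduce $\Lambda^2(E^{\perp})^*$ via \eqref{e:for_Lambda^2E^perp} and Remark \ref{rem:Lambda^2}; (2) tensor with $E^*$, resp.\ $(E^{\perp})^*$, and refine to line-bundle graded pieces on $Z$ using \eqref{e:first_exact_seq}, its dual, and \eqref{e:fundamental_exact_seq}; (3) apply \cite{Bott-BBW} and the Corollary identifying $U_{m,n}$ with sections of $(L_+^*)^m\otimes(L_-^*)^n$ to each piece, also using the vanishing $H^{\geq1}(E^*)=H^{\geq1}(E^{\perp})=H^{\geq1}(\Lambda^2E^*)=H^{\geq1}(\Lambda^2(E^{\perp})^*)=0$; (4) run the long exact sequences, determining all connecting maps by equivariance except one, whose rank gives $k\in\{2,3\}$; (5) collect terms using the $G_2$-decompositions of $\Lambda^2U_{1,0}$, $U_{1,0}\otimes U_{1,0}$, $U_{1,0}\otimes U_{0,1}$ to read off the two displayed formulas in \eqref{e:for_torsion}.
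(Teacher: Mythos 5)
Your proposal is correct and follows essentially the same route as the paper: filter via \eqref{e:for_Lambda^2E^perp} and \eqref{e:first_exact_seq}, reduce to line-bundle pieces, apply \cite{Bott-BBW}, and leave one equivariant connecting map (between two spaces each containing a copy of $U_{0,1}$) undetermined, which is exactly the source of $k\in\{2,3\}$. Two minor points of divergence: in the paper that ambiguous map is the connecting homomorphism $H^0(L_+^*\otimes E^*)\to H^1(\odot^2U_+^*)$ in the cohomology sequence of \eqref{e:for_odot_E^*} (so it already affects $H^*(\odot^2E^*)$, rather than arising from the morphism between \eqref{e:fundamental_exact_seq_2} and \eqref{e:fundamental_exact_seq}), and the second formula is obtained more economically by tensoring $0\to Z\times U_{0,0}\to(E^{\perp})^*\to E^*\to0$ with $\Lambda^2(E^{\perp})^*$, which reduces it to the first formula plus the known cohomology of $\Lambda^2(E^{\perp})^*$, instead of tensoring \eqref{e:for_Lambda^2E^perp} by $(E^{\perp})^*$.
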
 
\begin{proof}
We have $\otimes^2E^*=(\odot^2E^*)\oplus(\Lambda^2E^*)$\,, where $\odot$ denotes the symmetric product.\\ 
\indent  
From \eqref{e:first_exact_seq} we obtain that $L_+\otimes E$ is the kernel of $\odot^2E\to\odot^2U_+$\,, and, consequently, we have the following exact sequence 
\begin{equation} \label{e:for_odot_E^*} 
0\longrightarrow\odot^2U_+^*\longrightarrow\odot^2E^*\longrightarrow L_+^*\otimes E^*\longrightarrow0\;. 
\end{equation}
\indent 
By using \cite{Bott-BBW}\,, we obtain that $H^1(\odot^2U_+^*)=U_{0,1}$ and $H^j(\odot^2U_+^*)=0$\,, for any $j\neq1$\,.\\ 
\indent 
Similarly, from $0\longrightarrow L_+^*\otimes U_+^*\longrightarrow L_+^*\otimes E^*\longrightarrow(L_+^*)^2\longrightarrow0$ 
(consequence of \eqref{e:first_exact_seq}\,), we deduce that $H^0(L_+^*\otimes E^*)=U_{0,1}\oplus U_{2,0}$ and $H^j(L_+^*\otimes E^*)=0$\,, for any $j\geq1$\,.\\ 
\indent 
Therefore the cohomology exact sequence of \eqref{e:for_odot_E^*} gives 
$$0\longrightarrow H^0(\odot^2E^*)\longrightarrow U_{0,1}\oplus U_{2,0}\longrightarrow U_{0,1}\longrightarrow H^1(\odot^2E^*)\longrightarrow0\;,$$ 
and $H^j(\odot^2E^*)=0$\,, for any $j\geq2$\,. Thus, either $H^1(\odot^2E^*)=0$ and $H^0(\odot^2E^*)=U_{2,0}$\,, 
or $H^1(\odot^2E^*)=U_{0,1}$ and $H^0(\odot^2E^*)=U_{0,1}\oplus U_{2,0}$\,.\\ 
\indent 
Together with Remark \ref{rem:Lambda^2}\,, this gives $H^j(\otimes^2E^*)=0$\,, for any $j\geq2$\,, and only one of the following\\  
\indent 
\quad(1) $H^1(\otimes^2E^*)=0$ and $H^0(\otimes^2E^*)=U_{1,0}\oplus U_{0,1}\oplus U_{2,0}$\,,\\ 
\indent 
\quad(2) $H^1(\otimes^2E^*)=U_{0,1}$ and $H^0(\otimes^2E^*)=U_{1,0}\oplus2U_{0,1}\oplus U_{2,0}$\,.\\ 
\indent 
Further, by tensorising the dual of \eqref{e:first_exact_seq} with $(L_+^*)^2\otimes U_+$ we obtain 
\begin{equation} \label{e:for_(L_+^*)^2_U_+_E^*} 
0\longrightarrow(L_+^*)^2\otimes U_+\otimes U_+^*\longrightarrow(L_+^*)^2\otimes U_+\otimes E^*\longrightarrow(L_+^*)^3\otimes U_+\longrightarrow0\;.
\end{equation}
\indent 
As $(L_+^*)^2\otimes U_+\otimes U_+^*=L_+^*\otimes U_+^*\otimes U_+^*=L_+^*\otimes\bigl((\odot^2U_+^*)\oplus L_+^*\bigr)=\bigl(L_+^*\otimes(\odot^2U_+^*)\bigr)\oplus(L_+^*)^2$\,, 
by using \cite{Bott-BBW}\,, we deduce $H^0\bigl((L_+^*)^2\otimes U_+\otimes U_+^*\bigr)=U_{2,0}$\,, $H^1\bigl((L_+^*)^2\otimes U_+\otimes U_+^*\bigr)=U_{0,0}$\,, 
and $H^j\bigl((L_+^*)^2\otimes U_+\otimes U_+^*\bigr)=0$\,, for any $j\geq2$\,.\\ 
\indent 
Similarly, $H^0\bigl((L_+^*)^3\otimes U_+\bigr)=U_{1,1}$ and $H^j\bigl((L_+^*)^3\otimes U_+\bigr)=0$\,, for any $j\geq1$\,.\\ 
\indent 
Therefore from \eqref{e:for_(L_+^*)^2_U_+_E^*} we obtain 
$$0\longrightarrow U_{2,0}\longrightarrow H^0\bigl((L_+^*)^2\otimes U_+\otimes E^*\bigr)\longrightarrow U_{1,1}\longrightarrow U_{0,0}\longrightarrow
H^1\bigl((L_+^*)^2\otimes U_+\otimes E^*\bigr)\longrightarrow0\;,$$ 
and $H^j\bigl((L_+^*)^2\otimes U_+\otimes E^*\bigr)=0$\,, for any $j\geq2$\,; consequently, 
\begin{equation} \label{e:H^j((L_+^*)^2_U_+_E_*)} 
\begin{split} 
H^1\bigl((L_+^*)^2\otimes U_+\otimes E^*\bigr)&=U_{0,0}\;,\\ 
H^0\bigl((L_+^*)^2\otimes U_+\otimes E^*\bigr)&=U_{2,0}\oplus U_{1,1}\;. 
\end{split} 
\end{equation}
\indent 
Also, by using \eqref{e:first_exact_seq} and \cite{Bott-BBW}\,, we obtain  
\begin{equation} \label{e:H^j(L_+^*_E^*)} 
\begin{split} 
H^0(L_+^*\otimes E^*)&=U_{0,1}\oplus U_{2,0}\;,\\ 
H^j(L_+^*\otimes E^*)&=0\;,\;\textrm{for any}\;j\geq1\;. 
\end{split} 
\end{equation}   
\indent 
By tensorising \eqref{e:first_exact_seq} with $(L_+^*)^2\otimes E^*$ we obtain the exact sequence 
$$0\longrightarrow L_+^*\otimes E^*\longrightarrow(L_+^*)^2\otimes E\otimes E^*\longrightarrow(L_+^*)^2\otimes U_+\otimes E^*\longrightarrow0\;,$$ 
which, together with \eqref{e:H^j((L_+^*)^2_U_+_E_*)} and \eqref{e:H^j(L_+^*_E^*)}\,, gives the following two exact sequences 
$$0\longrightarrow U_{0,1}\oplus U_{2,0}\longrightarrow H^0\bigl((L_+^*)^2\otimes E\otimes E^*\bigr)\longrightarrow U_{2,0}\oplus U_{1,1}\longrightarrow0\;,$$ 
$$0\longrightarrow H^1\bigl((L_+^*)^2\otimes E\otimes E^*\bigr)\longrightarrow U_{0,0}\longrightarrow0\;.$$ 
\indent 
We have, thus, proved the following 
\begin{equation} \label{e:H^j((L_+^*)^2_E_E^*)} 
\begin{split} 
H^0\bigl((L_+^*)^2\otimes E\otimes E^*\bigr)&=U_{0,1}\oplus2U_{2,0}\oplus U_{1,1}\;,\\ 
H^1\bigl((L_+^*)^2\otimes E\otimes E^*\bigr)&=U_{0,0}\;,\\ 
H^j\bigl((L_+^*)^2\otimes E\otimes E^*\bigr)&=0\;,\;\textrm{for any}\;j\geq2\;. 
\end{split} 
\end{equation} 
\indent 
Now, by tensorising \eqref{e:for_Lambda^2E^perp} with $E^*$ we obtain 
\begin{equation} \label{e:for_T}
0\longrightarrow\otimes^2E^*\longrightarrow E^*\otimes\Lambda^2(E^{\perp})^*\longrightarrow(L_+^*)^2\otimes E\otimes E^*\longrightarrow0\;. 
\end{equation} 
\indent 
From the cohomology exact sequence of \eqref{e:for_T}\,, together with \eqref{e:H^j((L_+^*)^2_E_E^*)} and the two possibilities (1) and (2)\,, above, 
it follows the first relation of \eqref{e:for_torsion}\,.\\ 
\indent 
To prove the second relation of \eqref{e:for_torsion}\,, we use the exact sequence 
$0\longrightarrow Z\times U_{0,0}\longrightarrow(E^{\perp})^*\longrightarrow E^*\longrightarrow0$\,, 
which implies 
\begin{equation} \label{e:last_for_torsion}  
0\longrightarrow\Lambda^2(E^{\perp})^*\longrightarrow(E^{\perp})^*\otimes\Lambda^2(E^{\perp})^*\longrightarrow E^*\otimes\Lambda^2(E^{\perp})^*\longrightarrow0\;. 
\end{equation} 
\indent 
Finally, the cohomology exact sequence of \eqref{e:last_for_torsion}\,, together with Remark \ref{rem:Lambda^2}\,, and the first relation of \eqref{e:for_torsion}\,,  
quickly completes the proof. 
\end{proof}

\indent 
Let $M$ be a manifold, with $\dim M=7,8$\,, endowed with an almost $G_2$-structure, through the representations $U_{1,0}$ and $U_{0,0}\oplus U_{1,0}$\,, respectively. 
Any compatible connection $\nabla$ on $M$ induces an almost twistorial structure on $M$, by using the Euclidean twistorial structure given by 
\eqref{e:fundamental_exact_seq} and \eqref{e:fundamental_exact_seq_2}\,, respectively, which we call \emph{the canonical almost twistorial structure of $(M,\nabla)$}\,. 

\begin{cor} \label{cor:integrab_G2} 
Let $M$ be a manifold, with $\dim M=7,8$\,, endowed with an almost $G_2$-structure, through the representations $U_{1,0}$ and $U_{0,0}\oplus U_{1,0}$\,, respectively. 
Let $\nabla$ be a compatible connection on $M$, and denote by $R$ and $T$ its curvature form and torsion tensor field, respectively.\\ 
\indent 
Then the following assertions are equivalent:\\ 
\indent 
\quad{\rm (i)} The canonical almost twistorial structure of $(M,\nabla)$ is integrable and the Penrose-Ward transformation can be applied to it (locally).\\ 
\indent 
\quad{\rm (ii)} $R=0$ and $T\in kU_{0,0}\oplus kU_{1,0}$\,, at each point, where $k=1,2$ if $\dim M=7,8$\,, respectively. 
\end{cor}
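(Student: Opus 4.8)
The plan is to reduce the statement, by a short representation-theoretic argument, to the cohomology computations of Theorem \ref{thm:for_torsion}.

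\emph{Reduction to an algebraic condition on $R$ and $T$.} Write $V$ for the representation defining the almost $G_2$-structure ($U_{1,0}$ if $\dim M=7$, $U_{0,0}\oplus U_{1,0}$ if $\dim M=8$) and $0\to\mathcal{E}\to Z\times V\to\mathcal{F}\to0$ for the Euclidean twistorial structure used, i.e. \eqref{e:fundamental_exact_seq} or \eqref{e:fundamental_exact_seq_2}; thus $\mathcal{E}=E^{\perp}$, while $\mathcal{F}=E^*$, respectively $\mathcal{F}=(E^{\perp})^*$. The canonical almost twistorial structure lives on the bundle $Z_M\to M$ with fibre $Z$ associated, through this data, to the bundle of $G_2$-frames of $M$; its almost complex structure is built from the tautological complex structure of the fibres and the horizontal distribution of $\nabla$, so its integrability is controlled by two $G_2$-equivariant obstructions, linear in the torsion $T$ and in the curvature $R$ respectively. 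The first of these — the torsion obstruction — is the image of $T$ under the natural $G_2$-equivariant linear map
\[
\mathcal{T}\colon\Lambda^2V^*\otimes V\longrightarrow H^0\!\bigl(\mathcal{F}\otimes\Lambda^2\mathcal{E}^*\bigr)
\]
obtained by composing $\Lambda^2\mathcal{E}\hookrightarrow Z\times\Lambda^2V\xrightarrow{T}Z\times V\twoheadrightarrow\mathcal{F}$; pointwise, $\ker\mathcal{T}=\{T:T(\Lambda^2 E^{\perp}_p)\subseteq E^{\perp}_p\text{ for all }p\in Q\}$. The second is a similar map linear in $R$, and moreover the Penrose--Ward transformation can be applied to the (integrable) structure precisely when, in addition, $R$ vanishes on the $\alpha$-distribution. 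Since $Z$ is the complete flag manifold of $G_2$, a computation with the $4$-planes $E^{\perp}_p$ ($p\in Q$) shows that this last requirement, together with the curvature obstruction to integrability, forces $R=0$, after which the curvature obstruction vanishes automatically. Hence assertion (i) is equivalent to: $R=0$ and $T\in\ker\mathcal{T}$, and it remains to show $\ker\mathcal{T}=kU_{0,0}\oplus kU_{1,0}$.

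\emph{Locating the kernel.} By Theorem \ref{thm:for_torsion} the target of $\mathcal{T}$ is $U_{1,0}\oplus k'U_{0,1}\oplus3U_{2,0}\oplus U_{1,1}$ if $\dim M=7$, and $3U_{1,0}\oplus(k'+1)U_{0,1}\oplus3U_{2,0}\oplus U_{1,1}$ if $\dim M=8$, where $k'\in\{2,3\}$ is a priori unrelated to the $k$ in the statement. On the source side, from $\Lambda^2U_{1,0}=U_{1,0}\oplus U_{0,1}$, $U_{1,0}\otimes U_{1,0}=U_{0,0}\oplus U_{1,0}\oplus U_{0,1}\oplus U_{2,0}$ and $U_{0,1}\otimes U_{1,0}=U_{1,0}\oplus U_{2,0}\oplus U_{1,1}$ one gets $\Lambda^2V^*\otimes V=U_{0,0}\oplus2U_{1,0}\oplus U_{0,1}\oplus2U_{2,0}\oplus U_{1,1}$ when $\dim M=7$, and (adding the $U_{0,0}$-factor of $V$) $\Lambda^2V^*\otimes V=2U_{0,0}\oplus5U_{1,0}\oplus3U_{0,1}\oplus3U_{2,0}\oplus U_{1,1}$ when $\dim M=8$. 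As $\mathcal{T}$ is $G_2$-equivariant, Schur's lemma makes it block-diagonal with respect to these isotypic decompositions, so $\ker\mathcal{T}$ is a sum of isotypic components; as the target contains no $U_{0,0}$ and only one, respectively three, copies of $U_{1,0}$, we obtain $\ker\mathcal{T}\subseteq kU_{0,0}\oplus kU_{1,0}$ with $k=1,2$ if $\dim M=7,8$.

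\emph{The reverse inclusion and conclusion.} It remains to prove that $\mathcal{T}$ is injective on the $U_{0,1}$-, $U_{2,0}$- and $U_{1,1}$-isotypic parts and has corank exactly $k$ on the $U_{1,0}$-part, so that $\ker\mathcal{T}=kU_{0,0}\oplus kU_{1,0}$. For this I would use the pointwise description of $\ker\mathcal{T}$ above together with Theorem \ref{thm:isotropic_associative} and Corollary \ref{cor:2_isotropic}, which tell how $E^{\perp}_p$ and $\Lambda^2E^{\perp}_p$ sit inside $V$ and $\Lambda^2V$; alternatively, one may factor $\mathcal{T}$ through the cohomology exact sequence of $0\to\mathcal{E}\to Z\times V\to\mathcal{F}\to0$ tensored by $\Lambda^2\mathcal{E}^*$ (using Proposition \ref{prop:H^0Lambda^2} and Remark \ref{rem:Lambda^2}), which realises $\ker\mathcal{T}$ as an intersection $(\Lambda^2V^*\otimes V)\cap H^0(\mathcal{E}\otimes\Lambda^2\mathcal{E}^*)$. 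Either way one checks that the ``scalar'' and ``vectorial'' torsions — built from the octonionic cross product, its contractions with $1$-forms, and (for $\dim M=8$) the $U_{0,0}$-factor of $V$ — do preserve every $E^{\perp}_p$, whereas a nonzero element of any $U_{0,1}$-, $U_{2,0}$- or $U_{1,1}$-component, and all but $k$ of the $U_{1,0}$-component, does not. I expect this last verification to be the main obstacle: the bookkeeping of the several $U_{1,0}$-summands must pin down the $U_{1,0}$-multiplicity of $\ker\mathcal{T}$ (hence $k=1$ if $\dim M=7$ and $k=2$ if $\dim M=8$) in a way robust to the ambiguity $k'\in\{2,3\}$ of Theorem \ref{thm:for_torsion}. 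Granting it, the proof finishes: if $R=0$ and $T\in kU_{0,0}\oplus kU_{1,0}$ then both obstructions vanish, so the canonical almost twistorial structure is integrable, and, $R$ being zero, the Penrose--Ward transformation applies; conversely, (i) forces $T\in\ker\mathcal{T}=kU_{0,0}\oplus kU_{1,0}$ and $R$ to vanish on all $\alpha$-planes, hence $R=0$.
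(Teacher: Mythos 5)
Your overall strategy is the paper's: reduce assertion (i) to the pointwise vanishing of the restrictions of $R$ to $\Lambda^2E^{\perp}$ and of $T$ to $\Lambda^2(E^{\perp})\otimes E$ (resp.\ $\Lambda^2(E^{\perp})\otimes E^{\perp}$), and then compare the $G_2$-decomposition \eqref{e:for_torsion_integrab_G2} of the source with the decomposition of the target supplied by Theorem \ref{thm:for_torsion} (and by Proposition \ref{prop:H^0Lambda^2} for the curvature). Your multiplicity computations agree with the paper's. But there are two problems. First, a direction slip: the count in your second paragraph (no $U_{0,0}$ in the target, one resp.\ three copies of $U_{1,0}$ there against two resp.\ five in the source) yields $\ker\mathcal{T}\supseteq kU_{0,0}\oplus kU_{1,0}$, not ``$\subseteq$''; you implicitly correct this in the next paragraph, where you identify the reverse inclusion as what remains. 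Second, and more seriously, that reverse inclusion --- injectivity of $\mathcal{T}$ on the $U_{0,1}$-, $U_{2,0}$- and $U_{1,1}$-isotypic parts, and the exact multiplicity $k$ of $U_{1,0}$ in the kernel --- is precisely what gives (i)$\Rightarrow$(ii), and you leave it conditional (``granting it, the proof finishes''), offering two possible routes without carrying either out. The analogous point arises for the curvature: your claim that the conditions force $R=0$ needs the subspaces $\Lambda^2E^{\perp}_p$, $p\in Q$, to span $\Lambda^2V$, which again is an injectivity statement not supplied by Schur's lemma alone. As written, therefore, the proposal establishes only (ii)$\Rightarrow$(i).

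In fairness, the paper's own proof is no more explicit here: it asserts that the conclusion ``follows from'' Proposition \ref{prop:H^0Lambda^2}, Theorem \ref{thm:for_torsion} and \eqref{e:for_torsion_integrab_G2}, i.e.\ from the same multiplicity comparison you perform. So you have reconstructed the intended argument and correctly located where its real content lies; but neither the multiplicity count nor your sketch closes the inclusion $\ker\mathcal{T}\subseteq kU_{0,0}\oplus kU_{1,0}$, and your proof is complete only once the deferred verification (for instance, the pointwise computation with the spaces $E^{\perp}_p$ via Theorem \ref{thm:isotropic_associative} and Corollary \ref{cor:2_isotropic} that you propose) is actually carried out.
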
 
\begin{proof}
We prove the $\dim M=7$ case, the proof of the other one is similar.\\ 
\indent 
By using \cite[2.7.3]{Bry-G2} we obtain 
\begin{equation} \label{e:for_torsion_integrab_G2}
(\Lambda^2U_{1,0})\otimes U_{1,0}=U_{0,0}\oplus2U_{1,0}\oplus U_{0,1}\oplus2U_{2,0}\oplus U_{1,1}\;. 
\end{equation}
\indent 
Assertion (i) is equivalent to the fact that, at each point, the sections given by restricting $R$ and $T$ to 
$\Lambda^2E^{\perp}$ and $\Lambda^2(E^{\perp})\otimes E$\,, respectively, are zero (here $R$ is seen as a bundle valued $2$-form, and $T$ as the tensor field of degree $(0,3)$ 
given by the torsion of $\nabla$ and the underlying Riemannian metric on $M$).\\ 
\indent 
The proof follows from Proposition \ref{prop:H^0Lambda^2}\,, Theorem \ref{thm:for_torsion}\,, and \eqref{e:for_torsion_integrab_G2}\,. 
\end{proof}

\begin{rem} \label{rem:corrigenda} 
Corollary \ref{cor:integrab_G2} corrects statements of \cite[\S2]{Pan-eholon}\,, and the results therein can be straightforwardly formulated 
in the current setting.  
\end{rem}


\begin{thebibliography}{10} 

\bibitem{Bott-BBW} 
R.~Bott, Homogeneous vector bundles, 
\textit{Ann. of Math. (2)}, {\bf 66} (1957) 203--248. 

\bibitem{Bry-G2} 
R.~L.~Bryant, Some remarks on $G_2$-structures,   
\textit{Proceedings of G\"okova Geometry-Topology Conference 2005}, G\"okova Geometry/Topology Conference (GGT), G\"okova, 2006, 75--109. 

\bibitem{DesLouPan} 
G. Deschamps, E. Loubeau, R. Pantilie, Harmonic maps and twistorial structures,
\textit{Mathematika}, {\bf 66} (2020) 112--124.  

\bibitem{Pan-qgfs} 
R.~Pantilie, On the embeddings of the Riemann sphere with nonnegative normal bundles, 
\textit{Electron. Res. Announc. Math. Sci.}, {\bf 25} (2018) 87--95.

\bibitem{Pan-eholon} 
R.~Pantilie, Twistor theory for exceptional holonomy, 
\textit{Mathematika}, {\bf 67} (2021) 54--60. 

\bibitem{Pan-dim6} 
R.~Pantilie, $G_2$-manifolds and geometry in dimension $6$\,, Preprint, IMAR, 2021 
(available from \href{https://arxiv.org/abs/2105.07177}{\tt https://arxiv.org/abs/2105.07177}).

\bibitem{Sal-holo_book} 
S.~Salamon, \textit{Riemannian geometry and holonomy groups}, 
Pitman Research Notes in Mathematics Series, 201. Longman Scientific \& Technical, Harlow; 
copublished in the United States with John Wiley \& Sons, Inc., New York, 1989, viii+201 pp. 

\end{thebibliography}
\end{document}